\newtheorem{theorem}{\sc Theorem}[section]
\newtheorem{lemma}[theorem]{\sc Lemma}
\newtheorem{ex}[theorem]{\sc Example}
 \title[Exponent of Self-similar finite $p$-groups]{Exponent of Self-similar finite $p$-groups}
\author{Alex Carrazedo Dantas}
\address{Universidade de Bras\'ilia, Departamento de Matem\'atica, Bras\'ilia - DF,  70910-900, Brazil}
\email{alexcdan@gmail.com}
\author{Emerson de Melo} 
\address{Universidade de Bras\'ilia, Departamento de Matem\'atica, Bras\'ilia - DF, 70910-900, Brazil}
\email{emerson@mat.unb.br}
\subjclass[2010]{20D05; 20D45.}
\keywords{powerful groups, self-similar}
\begin{document}
\begin{abstract}
Let $p$ be a prime and $G$ a pro-$p$ group of finite rank that admits a faithful, self-similar action on the $p$-ary rooted tree. We prove that if the set $\{g\in G \ | \ g^{p^n}=1\}$ is a nontrivial subgroup for some $n$, then $G$ is a finite $p$-group with exponent at most $p^n$. This applies in particular to power abelian $p$-groups.

\end{abstract}
\maketitle
\section{Introduction}

A group $G$ is self-similar of some integer degree $m$ if it has a faithful representation on an infinite regular one-rooted $m$-tree $\mathcal{T}_{m}$ such that the representation is state-closed and is transitive on the tree's first level. Equivalently, a group $G$ is self-similar of degree $m$ if there exists a subgroup $H$ of $G$ of index $m$ and a homomorphism $f$ from $H$ to $G$ such that the only subgroup of $H$, normal in $G$ and $f$-invariant is the trivial subgroup.

Faithful self-similar representations are known for many individual finitely generated groups ranging from the torsion groups of Grigorchuk and Gupta-Sidki to free groups. Such representations have also been  studied for the family of abelian groups, of finitely generated nilpotent groups, wreath product of abelian groups, as well as for arithmetic groups, see \cite{DS}, \cite{DS1}, \cite{BS}, \cite{BeS}, and \cite{K} for more references.

Let $p$ be a prime and $G$ a pro-$p$ group. Consider the following subgroups of $G$: $\Omega_n(G)=\langle g\in G \ | \ g^{p^n}=1 \rangle$ and $G^{p^{n}}=\langle g^{p^{n}} \ | \ g\in G \rangle$. If $G$ is abelian, we have the following simple description of these groups:
\begin{enumerate}
    \item[(1)]  $\Omega_n(G)=\{ g\in G \ | \ g^{p^n}=1 \}$.
    \item[(2)]  $G^{p^{n}}=\{ g^{p^{n}} \ | \ g\in G \}$.
\end{enumerate}
Moreover, we have that:
\begin{enumerate}
     \item[(3)]  $|G^{p^{n}}|=|G:\Omega_n(G)|$ for all $i$.
\end{enumerate}
We call a finite $p$-group $G$ power abelian if it satisfies these three conditions for all $i$. It is a very interesting problem to determine which groups are power abelian. In \cite{PH} P. Hall introduced the notion of regular groups and showed that they are power abelian. Recently, it was proved in \cite{JJ} that potent $p$-groups are also power abelian. Recall that a finite $p$-group $G$ is potent if $[G,G]\leq G^4$ for $p=2$ or $\gamma_{p-1}(G)\leq G^p$ for $p>2$. Note that the family of potent $p$-groups includes powerful $p$-group ($[G,G]\leq G^4$ for $p=2$ or $[G,G]\leq G^p$ for $p>2$). More information on finite $p$-groups satisfying property $(1)$, $(2)$ or $(3)$ can be found in \cite{AM}. 

Not much is known about self-similarity of finite $p$-groups. We can mention the work of Sunic \cite{S}, he proved that a finite $p$-group $G$ is self-similar of degree $p$ with an abelian first level stabilizer if and only $G$ is a split extension of an elementary abelian group by a cyclic group of order $p$. Actually, in \cite[Theorem 2.2]{cpa} it was  showed that this characterization extends to all finite p-groups with an abelian maximal subgroup, without needing to specify any properties of its action on the tree. In \cite{cpa} it was also proved that if $G$ is a self-similar finite $p$-group of rank $r$ (self-similar of degree $p$), then its order is bounded by a function of $p$ and $r$. In other words, they proved that, for every prime $p$, there are only finitely many self-similar $p$-groups of a given rank.

In the present article we address the case where $G$ is a self-similar pro-$p$ group of degree $p$. We show that the exponent of $G$ is determined by its ``power structure''. Recall that a pro-$p$ group has finite rank if there exists a positive integer $r$ such that any closed subgroup has a topological generating set with no more than $r$ elements.  

\begin{theorem}\label{th1}
Let $p$ be a prime and $G$ a pro-$p$ group of finite rank such that $\{g\in G \ | \ g^{p^n}=1\}$ is a nontrivial subgroup of $G$ for some $n$. If $G$ is self-similar  of degree $p$, then $G$ is a finite $p$-group and has exponent at most $p^n$.
\end{theorem}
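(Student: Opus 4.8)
The plan is to show that the hypothesis forces every element of $G$ to have order dividing $p^n$; once we know $G=\Omega_n(G)$, the group has finite exponent, and since a pro-$p$ group of finite rank with finite exponent is finite (it has an open torsion-free uniform subgroup, which must then be trivial), we conclude that $G$ is finite of exponent at most $p^n$. So the real content is the equality $G=T$, where $T:=\{g\in G\mid g^{p^n}=1\}$ is the nontrivial subgroup supplied by the hypothesis. Note first that $T=\Omega_n(G)$ is characteristic, hence normal, and has exponent dividing $p^n$; being a closed subgroup of a finite rank group it again has finite rank, so the same finiteness principle shows $T$ is a \emph{finite} normal subgroup of $G$.

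Next I would fix the self-similar data from the equivalent definition in the introduction: a subgroup $H\le G$ with $[G:H]=p$ and a homomorphism $f\colon H\to G$ whose only $G$-normal, $f$-invariant subgroup is trivial. Since $G$ is pro-$p$ and $[G:H]=p$, the subgroup $H$ is automatically normal (index-$p$ subgroups contain the Frattini subgroup). The one algebraic fact that makes the torsion interact with $f$ is that $f$, being a homomorphism, does not increase orders: if $x\in H$ satisfies $x^{p^n}=1$ then $f(x)^{p^n}=1$, giving the crucial inclusion $f(T\cap H)\le T$.

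With this in hand I would run the following dichotomy. If $T\le H$, then $T$ is a nontrivial subgroup of $H$ that is normal in $G$ and satisfies $f(T)\le T$; this contradicts the triviality of the core. Therefore $T\not\le H$, and by maximality of $H$ we get $G=TH$ and $[T:T\cap H]=p$. The heart of the proof is then to rule out $G\neq T$. Assuming $G\neq T$, I would manufacture a nontrivial $G$-normal, $f$-invariant subgroup sitting inside $H$, contradicting faithfulness. Two complementary devices are available: on the one hand one can descend along the finite normal subgroup $T\cap H$ by repeatedly intersecting with $f$-preimages and taking $G$-cores, a process that, since $T$ is finite, stabilizes at a $G$-normal $f$-invariant subgroup of $H$; on the other hand one can exploit a $G$-minimal subgroup $N\le T$, which is central of order $p$, and analyze the cases $N\le H$ versus $N\not\le H$ together with whether $f(N)\le N$. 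In the favourable configurations (for instance $f(N)=1$, or a nontrivial stable piece surviving the descent) one directly produces the forbidden subgroup.

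The main obstacle, and where the argument must be most careful, is precisely that $f$ does not commute with conjugation in $G$: the vertex-sections of a conjugate satisfy a relation of the shape $h|_i=s\,f(h^{g})\,s^{-1}$ for suitable $s,g\in G$, so an $f$-invariant subgroup is not automatically $G$-normal and vice versa. Reconciling these two requirements—ensuring the core-producing descent does \emph{not} collapse to the identity when $G\neq T$—is the crux, and it is here that the finiteness of $T$ (coming from finite rank) and the transitivity of the level-one action must be used decisively. Once the contradiction is obtained we have $G=T$, and the finiteness principle quoted at the start upgrades ``exponent dividing $p^n$'' to ``$G$ finite of exponent at most $p^n$,'' which in particular settles the power abelian case, since condition $(1)$ is exactly the hypothesis that $T$ is a subgroup.
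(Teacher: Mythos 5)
Your reduction is sound as far as it goes: $T=\Omega_n(G)$ is finite (finite rank plus bounded exponent), a homomorphism cannot increase orders, so $f(T\cap H)\le T$, and $T\le H$ would make $T$ a nontrivial $G$-normal $f$-invariant subgroup, so $T\not\le H$. All of this agrees with the paper. But the proposal stops exactly where the proof would have to start: you never derive anything from the assumption $G\neq T$. The two ``devices'' you offer do not do the job as described. The core-descent device is self-defeating: whatever subgroup the descent stabilizes at is $G$-normal and $f$-invariant, hence \emph{must} be trivial by simplicity of $f$; so the descent collapses to the identity for every self-similar $G$, whether or not $G=T$, and no contradiction can come out of it unless you prove separately that the stable term is nontrivial --- which is precisely the missing content, as you concede when you call it ``the crux.'' The minimal-subgroup device likewise ends after listing cases ($N\le H$ or not, $f(N)\le N$ or not) and only handles the ``favourable configurations''; in the unfavourable ones (say $N\le H$ but $f(N)$ is a different subgroup of order $p$, or $N\not\le H$) nothing forbidden is produced. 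So the argument is genuinely incomplete at its central step.

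What is missing is the paper's inductive mechanism, which never tries to manufacture a forbidden subgroup inside the original $G$ at all. The paper first proves (Lemma~\ref{split}) that $G=H\rtimes\langle a\rangle$ with $a^{p}=1$, and, crucially, (Lemma~\ref{virt2}) that $f$ restricted to $H\cap H^{f}$ is again a \emph{simple} virtual endomorphism of degree $p$ of the group $H^{f}$ --- self-similarity is inherited one level down. It then argues by induction on $|\Omega_n(\cdot)|$: simplicity forces $\Omega_n(H)$ not to be $f$-invariant (this is where the forbidden-subgroup observation is actually used, and it only needs to be used on $\Omega_n(H)$, not on some cleverly constructed subgroup), the inductive hypothesis applied to $H^{f}$ gives $(H^{f})^{p^n}=1$, and Theorem~\ref{exp} transfers this back to $H^{p^n}=1$, since $H^{p^n}$ is characteristic in $H$, normal in $G$, and contained in $\ker f$. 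The conclusion $G=T$ then comes out positively, not by contradiction: $H$ and $a$ both lie in the \emph{subgroup} $T$, so $G=\langle H,a\rangle\le T$. Together with the base case $\Omega_n(H)=1$ (where $G=H\times\langle a\rangle$, so $G^{p}=H^{p}$ is $f$-invariant and normal, forcing $H^{p}=1$), this recursion is the idea your proposal lacks; without it, or a substitute of comparable strength, the step you flag as the crux remains unproved.
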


If $G$ is a finite $p$-group, then of course $\Omega_n(G)=\{g\in G \ | \ g^{p^n}=1\}$ for some $n$. Using Theorem \ref{th1} we obtain that the exponent of a self-similar finite $p$-group satisfying property $(1)$ is equal to $p$. In particular, the exponent of a finite self-similar power abelian $p$-group is equal to $p$. In some sense Theorem \ref{th1} generalize the following results: 

\begin{itemize}
    \item A self-similar torsion abelian group of degree $p$ has exponent $p$, as proved by  Brunner and Sidki \cite{BS}.
    \item If a finitely generated nilpotent group $G$ is self-similar of degree $p$, then $G$ is either free abelian or a finite $p$-group, as proved by Berlatto and Sidki \cite[Corollary 2]{BeS}.
\end{itemize}

It should be mentioned that if $G$ is self-similar of degree $p$ and the set $\{g\in G \ | \ g^{p^n}=1\}$ is not a subgroup, then $G$ need not be a torsion free group. For example the pro-2 group $\mathbb{Z}_2 \rtimes C_2$ (dihedral pro-2 group) is self-similar of degree 2 and the set of elements of order 2 is not a subgroup.

We use Theorem \ref{th1} to obtain a generalization of  the above mentioned result of \cite{S}.

\begin{theorem}\label{th2}
Let $p$ be a prime. If  $G$ is a self-similar finite $p$-group of degree $p$ such that the first level stabilizer is power abelian, then $G$ is a split extension of a $p$-group of exponent $p$ by a cyclic group of order $p$. 
\end{theorem}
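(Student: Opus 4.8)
The plan is to identify the exponent-$p$ normal subgroup with the first-level stabilizer itself, and then to split off a cyclic complement of order $p$. Write $H$ for the first-level stabilizer and $f\colon H\to G$ for the associated virtual endomorphism, so that self-similarity of degree $p$ means precisely that the only subgroup of $H$ that is normal in $G$ and $f$-invariant is trivial. Since $G$ is a finite $p$-group, $H$ is a maximal subgroup, hence normal of index $p$; by hypothesis $H$ is power abelian, and we may assume $H\neq 1$. The proof then reduces to two claims: (I) $\exp(H)=p$, and (II) the extension $1\to H\to G\to C_p\to 1$ is split by an element of order $p$. Granting these, $N:=H$ is normal of exponent $p$ and index $p$, and $G=H\rtimes C$ with $C\cong C_p$, which is the assertion.

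For Claim (I) I would re-run the argument of Theorem \ref{th1}, but relative to $H$. Because $H$ is power abelian it satisfies properties $(1)$--$(3)$, so $\Omega_1(H)$ and $H^{p}$ are subgroups with $|H^{p}|=|H:\Omega_1(H)|$; both are characteristic in $H$ and hence normal in $G$. Setting $H_1=f^{-1}(H)$, triviality of the $f$-core forces $f(H)\not\le H$ (otherwise $H$ itself would be a nontrivial $f$-invariant $G$-normal subgroup of $H$), so $H_1$ is a maximal subgroup of $H$ and therefore $H^{p}\le\Phi(H)\le H_1$; consequently $f(H^{p})\le H$. The goal is to upgrade this to $f(H^{p})\le H^{p}$: once $H^{p}$ is $f$-invariant it is a subgroup of $H$ that is normal in $G$ and $f$-invariant, so $H^{p}=1$ and $\exp(H)=p$. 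I expect this upgrade to be the main obstacle. The difficulty is that $f$ may carry elements of $H$ outside $H$, so that $f(h)^{p}=f(h^{p})$ lands in $H$ but not visibly among the $p$-th powers of $H$; the set equality $H^{p}=\{h^{p}:h\in H\}$ and the bijection $H/\Omega_1(H)\to H^{p}$ afforded by power-abelianness must be combined with the wreath-recursion description of the states $\pi_i(h)=f(h^{t^{i}})$, where $t\in G\setminus H$, to show that the $p$-power map is compatible with $f$. Equivalently, one extracts from a nontrivial $H^{p}$ a nontrivial $f$-invariant subgroup of $H$ normal in $G$, contradicting faithfulness.

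For Claim (II), assume $\exp(H)=p$; then every $t\in G\setminus H$ satisfies $t^{p^{2}}=1$, so $\exp(G)\le p^{2}$. Fixing such a $t$, the $0$-th state of $t^{p}$ is $f(t^{p})=t_0t_1\cdots t_{p-1}$, and I would replace $t$ by some $t'\in tH$ with $(t')^{p}=1$: using $\exp(H)=p$ and the power-abelian structure, the assignment $h\mapsto(th)^{p}$ can be solved to give an element of order $p$ in the coset, while the non-split alternative (every element of $tH$ having order $p^{2}$) is excluded by faithfulness, exactly as self-similarity of degree $p$ forbids $C_{p^{2}}$ and $C_{p^{2}}\times C_{p}$ — a nontrivial $f$-invariant $G$-normal subgroup would otherwise arise inside the normal closure of $\langle t^{p}\rangle$. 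Then $C:=\langle t'\rangle\cong C_p$ meets $H$ trivially, so $G=H\rtimes C$. The whole argument thus hinges on Claim (I), namely on transporting the exponent mechanism of Theorem \ref{th1} from $G$ to its first-level stabilizer $H$, the subtlety being that the $f$-core is controlled only relative to $G$ and not to $H$.
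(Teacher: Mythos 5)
Your reduction to Claims (I) and (II) has the right shape, and Claim (II) is not really where the difficulty lies: once $\exp(H)=p$ and $H\neq 1$, Lemma \ref{split} of the paper already produces an element $a$ of order $p$ in $H^f\setminus H$, whence $G=H\rtimes\langle a\rangle$; your ad hoc coset argument (solving $h\mapsto (th)^p$, excluding the non-split alternative ``by faithfulness'') can and should simply be replaced by that citation. The genuine gap is Claim (I), and you say so yourself: you correctly derive $f(H^p)\le H$ from $H^p\le\Phi(H)\le f^{-1}(H)$, but the ``upgrade'' to $f(H^p)\le H^p$ is never proved, and your closing sentence (``equivalently, one extracts from a nontrivial $H^p$ a nontrivial $f$-invariant subgroup\dots'') is a restatement of the goal, not an argument. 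Moreover, this upgrade is not a technical detail one can expect to push through: $f(H^p)=(H^f)^p$, and since $H^f\not\le H$ these are $p$-th powers of elements lying \emph{outside} $H$; they land in $G^p\le\Phi(G)\le H$, but there is no a priori reason for them to land in $H^p$ --- indeed the containment $(H^f)^p\le H^p$ is essentially equivalent to the conclusion $H^p=1$ you are trying to reach. So the crux of the theorem is missing.

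The idea you lack is precisely how the paper sidesteps this obstacle: do not study $H^p$ inside $G$ at all, but change the ambient group. By Lemma \ref{virt2}, the restriction $f\colon H\cap H^f\to H^f$ is again a simple virtual endomorphism of degree $p$, now of the group $H^f$. Power-abelianness of $H$ is used at exactly one point: it guarantees that $\{g\in H^f \mid g^p=1\}$ is a subgroup of $H^f$ (nontrivial when $H\neq 1$, since $\ker f=H$ is impossible by simplicity), so that Theorem \ref{th1} applies to $H^f$ --- a finite $p$-group being in particular a pro-$p$ group of finite rank --- and yields $\exp(H^f)\le p$. Then Theorem \ref{exp} transfers the conclusion back to $H$: if $(H^f)^p=1$ then $H^p\le\ker f$, and $H^p$, being characteristic in $H$, is normal in $G$ and $f$-invariant, hence trivial by simplicity. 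This three-step maneuver (Lemma \ref{virt2}, then Theorem \ref{th1} applied to $H^f$, then Theorem \ref{exp}) replaces the unproved compatibility of the $p$-power map with $f$ on which your outline founders; without it, or some substitute for it, your proposal does not prove Claim (I).
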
 

In general, $p$-groups with a maximal power abelian subgroup are not self-similar. In particular, we have the following example. Let $G$ be an extra-special $p$-group of exponent $p$ and order at least $p^5$. Let $H$ be a maximal subgroup of $G$. In this case we have that $[G,G]=[H,H]$. Then the subgroup $[H,H]$ is normal and $f$-invariant for any homomorphism $f$ from $H$ to $G$ and $G$ is not self-similar. On the other hand, in Section 4 we provide examples of self-similar finite $p$-groups with nonabelian but power abelian first level stabilizer.

\section{Preliminaries}

\noindent {\bf Self-similar groups:} Let $\mathcal{A}_{m} = Aut(\mathcal{T}_{m})$ be the group of automorphisms  of the infinite regular one-rooted $m$-ary tree $\mathcal{T}_{m}$. The group  $\mathcal{A}_{m}$ is isomorphic to  $\mathcal{A}_{m} \wr S_{m}$, where $S_{m}$ is the symmetric group of degree $m$. Then an element $\alpha$ of $\mathcal{A}_{m}$ is given by $\alpha = (\alpha_{0}, \alpha_{1}, ..., \alpha_{m-1}) \sigma(\alpha)$, where $\alpha_{i} \in \mathcal{A}_{m}$ and $\sigma(\alpha)$ is the action of $\alpha$ in the first level of $\mathcal{T}_{m}$. A subgroup $G$ of $\mathcal{A}_{m}$ is state-closed if for all $\alpha = (\alpha_{0}, \alpha_{1}, ..., \alpha_{m-1}) \sigma(\alpha) \in G$ implies $\alpha_{i} \in G$. Also, let $P(G)$ denote the permutation group induced by $G$ on the first level of the tree. We say $G$ is transitive provided $P(G)$ is transitive. A subgroup $G$ of $\mathcal{A}_{m}$ is said to be self-similar provided it is a state-closed and transitive group. \\

\noindent {\bf Virtual endomorphism:} Let $G$ be a group and let $H$ be a subgroup of $G$ of index $m$. A homomorphism $f: H \rightarrow G$ is called a virtual endomorphism of $G$. Let $T = \{t_{0}, t_{1}, ..., t_{m-1}\}$ be a transversal of $H$ in $G$. Thus 
$$\varphi: g \mapsto g^{\varphi} = \left([t_{i} g t_{(i)\sigma(g)}^{-1}]^{f \varphi} \right)_{0 \leq i \leq m-1} \sigma(g), \, \forall g \in G,$$
is a homomorphism of $G$ in $\mathcal{A}_{m}$, where $\sigma(\alpha)$ is the permutation of $T$ induced by $g$. The kernel of $\varphi$ is the subgroup $f-core(H)$ given by
$$f-core(H) = \langle K \leq H \mid H \vartriangleleft G, K^{f} \leq K \rangle.$$
If $f-core(H) = \{1\}$, then $f$ is called simple endomorphism. Note that $H^{\varphi}  = Fix_{G^{\varphi}}(0) = \{g^{\varphi} \in G^{\varphi} \mid (0)\sigma(g) = 0\}$. The next theorem is well-known (see \cite{Ne0} and \cite{Ne} for further details).
\begin{theorem}
A group $G$ is self-similar of degree $m$ if and only if there exists a simple endomorphism $f: H \rightarrow G$, where $H$ is a subgroup of $G$ of index $m$.
\end{theorem}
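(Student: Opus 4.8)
The plan is to prove both implications by exploiting the explicit virtual-endomorphism construction recorded in the preliminaries, whose kernel is known to be $f\text{-core}(H)$. For the implication ($\Leftarrow$), suppose $f\colon H\to G$ is a simple virtual endomorphism with $[G:H]=m$, and fix a transversal $T=\{t_0,\dots,t_{m-1}\}$. The associated map $\varphi\colon G\to\mathcal{A}_m$ is a homomorphism with $\ker\varphi=f\text{-core}(H)$, and since $f$ is simple this kernel is trivial, so $\varphi$ is faithful and $G\cong G^\varphi\le\mathcal{A}_m$. It then remains to verify the two defining properties of self-similarity. The $i$-th state of $g^\varphi$ is $\bigl(t_i g t_{(i)\sigma(g)}^{-1}\bigr)^{f\varphi}$; because $Ht_i g = Ht_{(i)\sigma(g)}$ the element $t_i g t_{(i)\sigma(g)}^{-1}$ lies in $H$, so its $f$-image lies in $G$ and the state lies in $G^\varphi$, giving state-closure. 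Transitivity is immediate, since $\sigma(g)$ is exactly the permutation induced by $g$ on the right cosets of $H$ and the coset action of $G$ is transitive. Hence $G^\varphi$ is a faithful, state-closed, first-level-transitive copy of $G$.

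For ($\Rightarrow$), assume $G\le\mathcal{A}_m$ is state-closed and transitive on the first level. I would take $H=\mathrm{Stab}_G(0)$; by transitivity the orbit of $0$ has size $m$, so $[G:H]=m$. Define $f\colon H\to G$ to be the section map $h\mapsto h_0$; this lands in $G$ by state-closure, and it is a homomorphism because sections of elements fixing $0$ multiply coordinatewise, $(gh)_0=g_0 h_0$. The crux is to show $f$ is simple, i.e. that $N:=f\text{-core}(H)$ is trivial. Here $N\trianglelefteq G$, $N\le H$, and $N^f\le N$ (the last because $f$ is a homomorphism, so the join of normal $f$-invariant subgroups of $H$ is again normal and $f$-invariant). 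I would prove by induction on the level that every element of $N$ fixes the whole tree, so that faithfulness of $G\le\mathcal{A}_m$ forces $N=1$. For the first level: if $n\in N$ and $j$ is a first-level vertex, transitivity supplies $g\in G$ with $(j)\sigma(g)=0$, and $g^{-1}ng\in N\le\mathrm{Stab}_G(0)$ fixes $0$, whence $(j)\sigma(n)=j$; thus $N$ acts trivially on level one. A short wreath-product computation then yields $(g^{-1}ng)_0=g_j^{-1}n_j g_j$ for this $g$, and since the left-hand side equals $f(g^{-1}ng)\in N^f\le N$, while $g_j\in G$ by state-closure and $N$ is normal, we conclude $n_j\in N$ for every first-level section. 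Feeding these sections into the inductive hypothesis shows that $n$ fixes level $k+1$ once every element of $N$ fixes level $k$, completing the induction.

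The main obstacle is precisely this last step: proving simplicity of the section endomorphism in the ($\Rightarrow$) direction. One must combine three ingredients — normality of $N$, transitivity on the first level, and the hypothesis $N^f\le N$ — to control not merely the first-level action but all first-level sections of elements of $N$, and then propagate the conclusion down the tree by induction; the conjugation identity $(g^{-1}ng)_0=g_j^{-1}n_j g_j$ is the computational heart that lets normality and state-closure carry a section back into $N$. By contrast, in the ($\Leftarrow$) direction the homomorphism property of $\varphi$ and the kernel computation $\ker\varphi=f\text{-core}(H)$ are already furnished by the preliminaries, so only the routine verification of state-closure and transitivity remains.
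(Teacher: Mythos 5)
Your proposal is correct, but a direct comparison with ``the paper's proof'' is not possible here: the paper does not prove this theorem at all --- it states it as well-known and defers to \cite{Ne0} and \cite{Ne}. What you have written is essentially the standard argument from those references, and it checks out. In the ($\Leftarrow$) direction you correctly observe that everything reduces to the construction already recorded in the paper's preliminaries: the coset identity $Ht_ig = Ht_{(i)\sigma(g)}$ puts $t_igt_{(i)\sigma(g)}^{-1}$ in $H$, so each state of $g^\varphi$ is an element of $G^{\varphi}$, and transitivity is just the transitivity of the coset action; combined with $\ker\varphi = f\text{-core}(H) = 1$ this gives the faithful self-similar representation. (You implicitly accept, as the paper does, that the recursive formula for $\varphi$ is well defined and a homomorphism; that is standard but is the one point you take on faith from the preliminaries rather than verify.) In the ($\Rightarrow$) direction, your identification of the crux is right: taking $H=\mathrm{Stab}_G(0)$ and $f(h)=h_0$, the only nontrivial claim is simplicity, and your induction is sound. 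The conjugation step is correct in detail: if $(j)\sigma(g)=0$ then $(0)\sigma(g^{-1})=j$, and once $N$ is known to fix level one, the wreath recursion gives $(g^{-1}ng)_0 = g_j^{-1}n_jg_j$; since $g^{-1}ng\in N\le H$ this section lies in $N^f\le N$, and normality of $N$ together with $g_j\in G$ (state-closure) returns $n_j\in N$, which feeds the induction showing $N$ fixes every level, whence $N=1$ by faithfulness of the action of $\mathcal{A}_m$ on the tree. In short: the proof is complete and correct; it supplies the detail the paper outsources to Nekrashevych, and its ($\Leftarrow$) half coincides with the machinery the paper itself sets up.
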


\begin{ex} \label{ex 2.3}
Consider the finite p-group $G$ with the presentation
$$\langle a, b \mid [a, b] = c, [a, c] = [b, c] = a^{p} = b^{p} = c^{p} = 1\rangle,$$
$H$ its subgroup $\langle a, c \rangle$ of index $p$ and define $f: H \rightarrow G$ the homomorphism that extend the map $a \mapsto c$, $c \mapsto b$. Note that $f$ is a simple virtual endomorphism. Hence $G$ is a self-similar group of degree $p$. If $T = \{1, b, ..., b^{p-1}\}$, then $H$, $f$ and $T$ iduce a representation $\varphi: G \rightarrow \mathcal{A}_{p}$ such that
$$G^{\varphi} = \langle \alpha = (\gamma, \gamma \beta^{p-1}, \gamma \beta^{p-2}, ..., \gamma \beta), \gamma = (\beta, ..., \beta), \beta = (0 \, 1 \, ... \, p-1) \rangle.$$
\end{ex}

The following two lemmas was proved in \cite[Lemma 3]{S} and \cite[Proposition 2]{BeS}, respectively. For the reader's convenience we supply a proof.

\begin{lemma}\label{split}
Let $G$ be a pro-$p$ group and $f:H\rightarrow G$ a simple virtual endomorphism of degree $p$. If $H$  contains elements of order $p$, then $H^{f}\setminus H$ contains elements of order $p$. In particular, $G=H \rtimes \langle a \rangle$ for some element $a$ of order $p$.
\end{lemma}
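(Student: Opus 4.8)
The plan is to argue by contradiction, producing a nontrivial $f$-invariant normal subgroup of $G$ inside $H$ whenever $H^{f}\setminus H$ fails to contain an element of order $p$, and then to read off the splitting.

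First I would record the standing structural fact that, since $G$ is pro-$p$ and $[G:H]=p$, the subgroup $H$ is normal in $G$; hence every characteristic subgroup of $H$ is normal in $G$. In particular the subgroup
$$E:=\Omega_{1}(H)=\langle h\in H \mid h^{p}=1\rangle$$
is normal in $G$, and it is nontrivial precisely because $H$ contains an element of order $p$ by hypothesis.

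The heart of the argument is to show that, assuming for contradiction that $H^{f}\setminus H$ contains no element of order $p$, the subgroup $E$ is $f$-invariant. For a generator $h$ of $E$ (so $h\in H$ and $h^{p}=1$) one has $f(h)^{p}=f(h^{p})=1$, so $f(h)$ has order dividing $p$ and lies in $H^{f}$. If $f(h)\neq 1$ then $f(h)$ has order exactly $p$; since no element of $H^{f}\setminus H$ has order $p$, it must lie in $H$, and therefore in $\Omega_{1}(H)=E$. As $f$ is a homomorphism this gives $E^{f}\le E$. Thus $E$ is a nontrivial subgroup of $H$ that is normal in $G$ and $f$-invariant, contradicting the simplicity of $f$ (that is, $f\text{-}\mathrm{core}(H)=1$). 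Consequently $H^{f}\setminus H$ must contain an element $a$ of order $p$. For the final assertion I would then note that, because $a\notin H$, $a^{p}=1$, and $H\vartriangleleft G$ with $[G:H]=p$, the cyclic group $\langle a\rangle\cong C_{p}$ meets $H$ trivially and $G=H\langle a\rangle$, whence $G=H\rtimes\langle a\rangle$.

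I expect the decisive point to be the verification that $E$ is $f$-invariant: this is exactly where the precise form of the contradiction hypothesis is used, namely to pull each image $f(h)$ back into $H$ and hence into $\Omega_{1}(H)$. Everything else (normality of $H$, normality of the characteristic subgroup $E$ in $G$, and the extraction of the split extension) is routine once this $f$-invariance is in place.
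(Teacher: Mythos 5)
Your proof is correct and takes essentially the same route as the paper's: both argue that if no element of order $p$ landed in $H^{f}\setminus H$, then $\Omega_{1}(H)$ (the paper's $\langle P\rangle$) would be a nontrivial, normal, $f$-invariant subgroup of $G$, contradicting simplicity of $f$. The only difference is cosmetic: you phrase it as a contradiction with the negated conclusion and spell out the final splitting $G=H\rtimes\langle a\rangle$, which the paper leaves implicit.
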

\begin{proof}
Let $P$ be the set of elements in $H$ of order dividing $p$. The set $P$ contains nontrivial elements and the subgroup $\langle P \rangle$ is a nontrivial, characteristic subgroup of
$H$. Therefore, $\Omega_1(P)$ is nontrivial, normal subgroup of $G$. The elements of $P$ are
mapped under $f$ to elements of order dividing $p$. If all elements of $P$ are mapped inside $H$, then they are mapped to other elements in $P$. In that case $\Omega_1(P)$ would be a nontrivial, normal, $f$-invariant subgroup of $G$, a  contradiction. Therefore there exists an element in $P$ that is mapped under $f$ outside of $H$. The image of such an element has order $p$ and this completes the proof.
\end{proof}

\begin{lemma}\label{virt2}
Let $G$ be a pro-$p$ group and $f:H\rightarrow G$ a simple virtual endomorphism of degree $p$ of $G$. Then $f:H\cap H^{f} \rightarrow H^{f}$ is also a simple virtual endomorphism of degree $p$.
\end{lemma}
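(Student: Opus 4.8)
The plan is to verify the three requirements for $f\colon H\cap H^f\to H^f$ to be a simple virtual endomorphism of degree $p$: that $H\cap H^f$ has index $p$ in $H^f$, that $f$ sends it into $H^f$, and --- the only substantial point --- that its $f$-core in $H^f$ is trivial. The first two I would dispose of quickly. I would first observe that $H^f\not\le H$: otherwise $H$ would be both normal in $G$ (a subgroup of index $p$ in a pro-$p$ group is normal and maximal) and $f$-invariant, hence contained in $f$-core$(H)=\{1\}$, leaving only the degenerate case $|G|=p$, in which no restricted endomorphism of degree $p$ exists. Since $H$ is maximal and $H^f\not\le H$, the subgroup $H^fH$ properly contains $H$, so $H^fH=G$ and $|H^f:H\cap H^f|=|G:H|=p$; and $f(H\cap H^f)\le f(H)=H^f$ is immediate. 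Thus only simplicity remains.

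For simplicity, let $L$ denote the $f$-core of $H\cap H^f$ in $H^f$, that is, the largest subgroup with $L\le H\cap H^f$, $L\trianglelefteq H^f$ and $L^f\le L$; the goal is to show $L=\{1\}$. The idea is to manufacture from $L$ a subgroup of $H$ that is normal in all of $G$ and $f$-invariant, which must then be trivial because $f$ is simple on $G$. The first device is $U:=f^{-1}(L)=\{h\in H\mid f(h)\in L\}$. Using that $f$ is a homomorphism on $H$ and that $L\trianglelefteq H^f=f(H)$, I would check that $f(U^h)=f(U)^{f(h)}=L^{f(h)}=L$ for every $h\in H$, so $U^h\le U$ and hence $U\trianglelefteq H$; moreover $f(U)=L$ (since $L\le f(H)$) gives $f$-invariance of $U$, while $L^f\le L$ yields $L\le U$.

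The remaining obstacle --- and the reason a naive normal-closure argument fails --- is that $f$ does not interact well with conjugation by elements of $G\setminus H$, so $U$ need not be normal in $G$. This is resolved by choosing the conjugating element inside $H^f$. Fix $a\in H^f\setminus H$; since $U\trianglelefteq H$ and $G=\langle H,a\rangle$, every $G$-conjugate of $U$ equals some $U^{a^i}$, so $U^{(1)}:=\bigcap_{i=0}^{p-1}U^{a^i}$ is the $G$-core of $U$, giving $U^{(1)}\trianglelefteq G$ and $U^{(1)}\le H$. The key point is that $a\in H^f$ normalizes $L$ (because $L\trianglelefteq H^f$), so $L^{a^{-i}}=L\le U$ for all $i$ and therefore $L\le U^{(1)}$; consequently $f(U^{(1)})\le f(U)=L\le U^{(1)}$, so $U^{(1)}$ is $f$-invariant as well. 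Being a normal, $f$-invariant subgroup of $G$ contained in $H$, it lies in $f$-core$(H)=\{1\}$, whence $L\le U^{(1)}=\{1\}$, which is exactly the simplicity of the restricted endomorphism. I expect the only delicate verifications to be the normality $U\trianglelefteq H$ and the containment $L\le U^{(1)}$, both of which hinge on having placed the conjugating element $a$ inside $H^f$.
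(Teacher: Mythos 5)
Your proof is correct, and its core is the same as the paper's: given a candidate subgroup of $H\cap H^f$ that is normal in $H^f$ and $f$-invariant (your $L$, the paper's $K$), both arguments pass to the preimage under $f$ (your $U=f^{-1}(L)$, the paper's $K^{f^{-1}}$), which is normal in $H$ and contains $L$, and then intersect its $G$-conjugates to obtain a subgroup (your $U^{(1)}$, the paper's $M$) that is normal in $G$, still contains $L$ --- precisely because the conjugating element $a$ lies in $H^f$ and so normalizes $L$ --- and is $f$-invariant since $f(U^{(1)})\le f(U)=L\le U^{(1)}$; simplicity of $f$ on $G$ then forces it to be trivial. The one step where you genuinely diverge is the production of $a\in H^f\setminus H$. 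The paper gets it from Lemma \ref{split} as an element of order $p$; but Lemma \ref{split} assumes $H$ contains elements of order $p$, a hypothesis that is not part of the present lemma and is never verified in the paper's proof of it (and the order of $a$ is never used afterwards). Your substitute --- if $H^f\le H$ then $H$ itself is a normal, $f$-invariant subgroup of $G$, hence trivial by simplicity --- requires no torsion assumption, so your argument covers cases (e.g.\ torsion-free $G$) where the paper's proof, read literally, does not apply; this is a small but real improvement. One caveat affects both proofs equally: in the degenerate situation $H=1$, $|G|=p$, the conclusion as stated actually fails, since then $H^f=H\cap H^f=1$ and the restricted map has degree $1$ rather than $p$; your parenthetical remark brushes past this, and it would be cleaner to say explicitly that the lemma tacitly excludes this case.
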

\begin{proof}
Let $a$ be an element of order $p$ in $H^{f} \setminus H$. Such an element exists by Lemma \ref{split}. Since $[G:H]=p$ and $H^{f}\neq H^{f}\cap H$ we have that $[H^{f}: H^{f}\cap H]=p$. Let $K$ be a subgroup of $H\cap H^{f}$, normal in $H^{f}$, with $K^{f}\leq K$. Thus, $K\leq K^{f^{-1}}$ and $K^{f^{-1}}$ is normal in $H$. Then $$K=K^a\leq (K^{f^{-1}})^a=(K^{f^{-1}})^{ha}$$ for all $h\in H$, that is $K\leq (K^{f^{-1}})^{g}$ for all $g\in G$. Let $M=\cap_{g\in G}(K^{f^{-1}})^{g}$. Then $K \leq M \leq K^{f^{-1}}$. Thus $M$ is a normal subgroup of $G$ such that $M^{f}\leq K \leq M$. Therefore, $M=1=K$.
\end{proof}

\begin{theorem}\label{exp}
Let $G$ be a pro-$p$ group and $f:H\rightarrow G$ a simple virtual endomorphism of degree $p$ of $G$. Then $H^{p^n}=1$ if and only if $(H^{f})^{p^n}=1$.
\end{theorem}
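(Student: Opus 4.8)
The plan is to separate the two implications, because one of them is essentially free. For the direction $H^{p^n}=1\Rightarrow (H^f)^{p^n}=1$ I would observe only that $f$ is a homomorphism: every element of $H^f$ is $h^f$ for some $h\in H$, and $(h^f)^{p^n}=(h^{p^n})^f=1^f=1$. Thus the substance of the theorem is the reverse implication, and for this I would work inside the faithful self-similar representation $\varphi\colon G\to\mathcal{A}_p$ determined by $f$ and a transversal $T=\{t_0,\dots,t_{p-1}\}$ with $t_0=1$. Since $f$ is simple, $\varphi$ is injective, so orders of elements of $G$ may be computed as orders of tree automorphisms in $G^\varphi$, and $H^\varphi=\mathrm{Fix}_{G^\varphi}(0)$.

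The first step is to pin down the first-level behaviour of elements of $H$. Since $G$ is self-similar, the induced first-level permutation group $P(G)\le S_p$ is transitive; being a continuous image of a pro-$p$ group it is also a finite $p$-group, so $P(G)$ is cyclic of order $p$ and acts regularly. In a regular $C_p$ the stabilizer of the point $0$ is trivial, so $H$ coincides with the full first-level stabilizer and every $g\in H$ satisfies $\sigma(g)=1$.

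The decisive step is then to read off the sections of such a $g$. Because $\sigma(g)=1$, the defining formula for $\varphi$ collapses to $g^\varphi=\bigl([\,t_i g t_i^{-1}\,]^{f\varphi}\bigr)_{0\le i\le p-1}$ with trivial root permutation. Each $t_i g t_i^{-1}$ lies in $H$, so each section $(t_i g t_i^{-1})^f$ lies in $H^f$. Now assume $(H^f)^{p^n}=1$: then every section satisfies $\bigl((t_i g t_i^{-1})^f\bigr)^{p^n}=1$, and since the root permutation of $g^\varphi$ is trivial, raising to the power $p^n$ acts coordinatewise and gives $(g^\varphi)^{p^n}=1$, whence $g^{p^n}=1$ by injectivity of $\varphi$. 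As $g\in H$ was arbitrary, $H^{p^n}=1$.

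I expect the only real obstacle to be the bookkeeping that makes the decisive step legitimate: recognising that $H$ is exactly the first-level stabilizer, so that the root permutation of each $g\in H$ is trivial and the order of $g^\varphi$ is the least common multiple of the orders of its sections, and checking that every section of such a $g$ genuinely lands in $H^f$. Once these two points are in place the hypothesis on $H^f$ bears on all $p$ coordinates simultaneously and the conclusion is forced. As a by-product the two implications together yield $\exp(H)=\exp(H^f)$.
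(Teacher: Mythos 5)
Your proof is correct, but it takes a genuinely different route from the paper's. The paper stays entirely on the algebraic side of the dictionary and never returns to the tree: assuming $(H^f)^{p^n}=1$, it notes that $(H^{p^n})^f\leq (H^f)^{p^n}=1$, so $H^{p^n}$ lies in $\ker f$ and is in particular $f$-invariant; since $H^{p^n}$ is characteristic in $H$, and $H$ is normal in $G$ (being of index $p$ in a pro-$p$ group), $H^{p^n}$ is a normal, $f$-invariant subgroup of $G$, hence trivial by simplicity of $f$. You instead convert simplicity into faithfulness of the representation $\varphi$ and compute orders inside $\mathcal{A}_p$; your coordinatewise power computation on the sections $[t_i g t_i^{-1}]^{f\varphi}\in (H^f)^\varphi$ is the geometric shadow of the paper's one-line inclusion $(H^{p^n})^f\leq (H^f)^{p^n}$. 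What your route buys is an explicit picture of why each individual element of $H$ has order at most $p^n$; what it costs is the bookkeeping you acknowledge (regularity of $P(G)$, hence triviality of the root permutation on $H$), plus one point argued a bit loosely: calling $P(G)$ a \emph{continuous} image of $G$ presupposes that $H$ is open, which is automatic in the paper's intended setting (finite rank, or finite $p$-groups) but is not literally among the hypotheses of this theorem --- though the paper's own proof makes the same implicit use of normality of $H$, so this is a shared debt rather than a defect of yours. In the end both arguments rest on the same two pillars, normality of $H$ in $G$ and simplicity of $f$, and both use the degree-$p$ hypothesis only through these; the paper's proof is simply shorter because it applies the definition of simplicity directly to the subgroup $H^{p^n}$ rather than passing through the tree.
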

\begin{proof}
It is clear that if $H^{p^n}=1$, then $(H^{f})^{p^n}=1$. Suppose that $(H^{f})^{p^n}=1$ e let $K$ be the kernel of $f$. Thus $H^{p^n} \leq K$ since $(H^{p^n})^{f}=1$. On the other hand, $H^{p^n}$ is a characteristic subgroup of $H$ and then it is a normal subgroup of $G$. Therefore, $H^{p^n}=1$   
\end{proof}


\section{Proof of Theorem \ref{th1} and Theorem \ref{th2}}

Assume the hypothesis of Theorem \ref{th1}. Thus $G$ is a pro-$p$ group of finite rank such that the set $\{g\in G \ | \ g^{p^n}=1\}$ is a nontrivial subgroup for some $n$ and there exists a simple virtual endomorphism $f:H\rightarrow G$ of degree $p$. We wish to show that $G$ has exponent $p^n$. Clearly in this case we have $\Omega_n(G)=\{g\in G \ | \ g^{p^n}=1\}$.

A pro-$p$ group of finite rank has a characteristic torsion free subgroup of finite index, see for example \cite[Corollary 4.3]{DSMS}. Then $G$ contains a torsion free subgroup $F$ of finite index. In particular, we have that $\Omega_n(G)\cap F=1$ and then $\Omega_n(G)$ is finite.

We argue by induction on the order of $\Omega_n(G)$. Note that $\Omega_n(H)=H\cap \Omega_n(G)$. First assume that $\Omega(H)= 1$. In this case we have $G=H \times\langle a \rangle$ where $a$ is an element of order $p$ since $\Omega_n(G)$ is normal in $G$ and $H$ has index $p$. We obtain that $G^p=H^p$. On the other hand we have that $(H^p)^f= (H^f)^p\leq G^p$. Then $H^p$ is an $f$-invariant normal subgroup of $G$. Therefore $H^p=1$ and we conclude that $G=\Omega_1(G)$.

Now, assume that $\Omega(H)\neq 1$. In this case we have that $G=H \rtimes\langle a \rangle$ where $a$ is an element of order $p$ by Lemma \ref{split}. If $\Omega_n(H)^f=\Omega_n(H)$, then $\Omega_n(H)=1$. Thus $|\Omega_n(H\cap H^f)|< |\Omega_n(H)|$. By induction and using Lemma \ref{virt2} we obtain that $H^f$ has exponent at most $p^n$. Now, by Lemma \ref{exp} $H$ has exponent at most $p^n$. Therefore $G=\{g\in G \ | \ g^{p^n}=1\}$ and this completes the proof of Theorem \ref{th1}.

Now, assume the hypothesis of Theorem \ref{th2}. Thus $G$ is a finite $p$-group, $H$ is power abelian and $f:H\rightarrow G$ a simple virtual endomorphism of degree $p$. Thus $\Omega_1(H)=\{h\in H \ | \ h^{p}=1\}$ and then $\Omega_1(H^{f})=\{g\in H^{f} \ | \ g^{p}=1\}$. Now, using Lemma \ref{virt2} and Theorem \ref{th1} we conclude that $H^{f}$ has exponent $p$. Therefore $H$ has exponent $p$ by Lemma \ref{exp} and the proof is complete. \\

\section{Discussion}

By \cite{S} a extra-special $p$-group of order $p^3$ and exponent $p$ and the wreath product $C_{p} \wr C_{p}$ are self-similar $p$-groups of degree $p$ with abelian first level stabilizer, since they have a maximal elementary abelian subgroup. It is clear that  they have exponent $p$ and $p^{2}$ respectively. In \cite[Proposition 2.9.3]{Ne} it was proved that the direct power of a self-similar group is again self-similar. In particular, it is easy to see that a direct power of extra-special $p$-groups of order $p^3$ and exponent $p$ is self-similar with a non-abelian but power abelian first level stabilizer. The next result shows that it is also possible to construct self-similar $p$-groups of exponent $p^2$ with a non-abelian but power abelian first level stabilizer.

\begin{theorem}
If $G$ is a self-similar finite $p$-group of degree $p$, then $P = G \wr C_{p}$ is also.
\end{theorem}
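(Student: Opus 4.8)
The plan is to realise $P=G\wr C_p$ directly as a state-closed, transitive subgroup of $\mathcal{A}_p$, starting from the given self-similar action of $G$. Regard $G\leq\mathcal{A}_p=\mathcal{A}_p\wr S_p$ as state-closed and transitive. I would define an embedding $\iota\colon P\to\mathcal{A}_p$ by letting the base group $G^p=G_0\times\cdots\times G_{p-1}$ act with $G_i$ on the $i$-th subtree (fixing the first level), and letting a generator $c$ of the top group $C_p$ act as a rooted $p$-cycle $\sigma_0\in S_p$. This $\iota$ is faithful: $G\hookrightarrow\mathcal{A}_p$ is faithful and the $p$ subtrees are independent, so $\iota$ is injective on $G^p$, while $\iota(c)$ moves first-level vertices and hence meets the first-level-fixing subgroup $\iota(G^p)$ trivially. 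Transitivity on the first level is immediate, since $\iota(c)=\sigma_0$ is a $p$-cycle.

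The crux---and the only place where the hypothesis that $G$ is a $p$-group is used---is the claim that $P(G)$ is cyclic of order $p$. Indeed $P(G)$, the image of $G$ under the first-level action $G\to S_p$, is a quotient of the finite $p$-group $G$, hence itself a $p$-group, and it is transitive because $G$ is self-similar. As the Sylow $p$-subgroups of $S_p$ have order $p$, the only transitive $p$-subgroup of $S_p$ is generated by a $p$-cycle; write $P(G)=\langle\sigma_0\rangle$ and use this same $\sigma_0$ in the definition of $\iota$. Thus every $g\in G$ has root permutation $\sigma(g)\in\langle\sigma_0\rangle$.

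With this the state-closedness of $\iota(P)$ follows. For $\alpha=\iota\big((g_0,\dots,g_{p-1})c^{\,j}\big)$ the first-level states are precisely $g_0,\dots,g_{p-1}\in G$. Writing $g_i=(g_{i,0},\dots,g_{i,p-1})\sigma(g_i)$, state-closedness of $G$ gives $g_{i,k}\in G$ and the previous paragraph gives $\sigma(g_i)\in\langle\sigma_0\rangle$, so $g_i=\iota\big((g_{i,0},\dots,g_{i,p-1})c^{\,k}\big)\in\iota(P)$. Hence $\iota(P)$ is state-closed; being also faithful and transitive, it is a self-similar representation of $P=G\wr C_p$ of degree $p$.

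The embedding and verifications are routine; the substantive step is the reduction $P(G)\cong C_p$. This is exactly where one needs $G$ to be a $p$-group rather than merely self-similar of degree $p$: for a general self-similar group of degree $p$ the root permutations need not lie in one cyclic group, and the above embedding of $G\wr C_p$ would fail to be state-closed. Beyond this the only bookkeeping is to confirm that $\iota$ identifies the abstract wreath product with its image, i.e. that conjugation by $\sigma_0$ realises the coordinate shift $c^{-1}(g_0,\dots,g_{p-1})c=(g_{p-1},g_0,\dots,g_{p-2})$.
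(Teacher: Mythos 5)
Your proof is correct, and the group you end up with is essentially the same one the paper constructs: the coordinate-wise copies of $G$ acting on the $p$ subtrees, extended by a rooted $p$-cycle, with state-closure verified by rewriting each state $g\in G$ as the tuple of its own states times a power of that cycle. The routes to this configuration differ, however, in how the key enabling fact is secured, namely that all root permutations occurring in $G$ are powers of a single $p$-cycle. The paper does not argue on an arbitrary representation: it goes back to the virtual-endomorphism data $f\colon H\to G$, invokes Lemma \ref{split} to split $G=H\rtimes\langle a\rangle$ with $a$ of order $p$, and then chooses the transversal $\{1,a,\dots,a^{p-1}\}$ so that in the induced representation $\varphi$ the element $a$ maps exactly to the rooted cycle $\sigma$ while $H^{\varphi}$ lands in the first-level stabilizer; the self-similar copy of $G\wr C_p$ is then generated as $\langle G^{\varphi\theta},\sigma\rangle$, where $\theta$ inflates $G^{\varphi}$ into the first coordinate. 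You instead start from an arbitrary state-closed transitive representation of $G$ and observe, via Sylow theory in $S_p$, that $P(G)$ must be cyclic of order $p$ generated by a $p$-cycle $\sigma_0$, which you adjoin as a rooted automorphism; note that your $\sigma_0$ need not lie in the image of $G$ at all, whereas the paper arranges for the rooted cycle to be $a^{\varphi}\in G^{\varphi}$. Your argument is thus independent of the virtual-endomorphism machinery and of Lemma \ref{split}, and it isolates precisely where the $p$-group hypothesis enters; the paper's version, in exchange, yields explicit recursive formulas for the generators in the style of Example \ref{ex 2.3}. The steps you defer as bookkeeping---that conjugation by the rooted $\sigma_0$ permutes the coordinates cyclically, so that $\iota$ identifies $G\wr C_p$ with $\langle\iota(G^p),\sigma_0\rangle$---are indeed routine.
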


\begin{proof}
By Theorem 2.2 and Lemma 2.4, there exist a subgroup $H$ of index $p$ in $G$, a simple endomorphism $f: H \rightarrow G$ and an element $a$ in $G$ of order $p$ such that $G = H \rtimes \langle a \rangle$. Let $T = \{1, a, a^{2}, ..., a^{p - 1}\}$ a transverse of $H$ in $G$. Then $H$, $f$ and $T$ induce the representation $\varphi: G \rightarrow \mathcal{A}_{p}$ that extend the following map
$$a \mapsto a^{\varphi} = (0 \, \, 1 \, \, ... \, \, p-1) = \sigma$$
$$h \mapsto h^{\varphi} = (h^{f \varphi}, h^{a^{p-1}f \varphi}, ..., h^{af \varphi}) = (k_{0}, k_{1}, ..., k_{p-1}),$$
where $h \in H$. Consider the homomorphism
$$\theta : G^{\varphi} \rightarrow \mathcal{A}_{p}$$
$$\, \, \, \, \, \, \, \, \, \, \, \, \, \, \, \, \, \, \, \, \, \, \, \, \, \, \, \, \, \, \, \, g^{\varphi} \mapsto (g^{\varphi}, e, ..., e).$$
We affirm the group $P = \langle G^{\varphi \theta}, \sigma \rangle$ is state-closed. In fact, the states of $P$ are the elements of $G^{\varphi}$. Since that $a^{\varphi} = \sigma \in P$, we need only show that $h^{\varphi} \in P$, for all $h \in H$. Note that
$$h^{\varphi} = (k_{0}, k_{1}, ..., k_{p-1}) = k_{0}^{\theta} k_{1}^{\theta \sigma} ... k_{p-1}^{\theta \sigma^{p-1}} \in \langle G^{\varphi \theta}, \sigma \rangle.$$
The assertion follows. Since $P$ is transitive and isomorphic to $G \wr C_{p}$, the results follows. 
\end{proof}

\end{document}